\newcommand{\nc}{\newcommand}
\nc{\fg}{\mathfrak{f} } \nc{\vg}{\mathfrak{v} } \nc{\wg}{\mathfrak{w} }
\nc{\zg}{\mathfrak{z} } \nc{\ngo}{\mathfrak{n} } \nc{\kg}{\mathfrak{k} }
\nc{\mg}{\mathfrak{m} } \nc{\bg}{\mathfrak{b} } \nc{\ggo}{\mathfrak{g} }
\nc{\ggob}{\overline{\mathfrak{g}} } \nc{\sog}{\mathfrak{so} }
\nc{\sug}{\mathfrak{su} } \nc{\spg}{\mathfrak{sp} } \nc{\slg}{\mathfrak{sl} }
\nc{\glg}{\mathfrak{gl} } \nc{\cg}{\mathfrak{c} } \nc{\rg}{\mathfrak{r} }
\nc{\hg}{\mathfrak{h} } \nc{\tg}{\mathfrak{t} } \nc{\ug}{\mathfrak{u} }
\nc{\dg}{\mathfrak{d} } \nc{\ag}{\mathfrak{a} } \nc{\pg}{\mathfrak{p} }
\nc{\sg}{\mathfrak{s} } \nc{\affg}{\mathfrak{aff} }
\nc{\pca}{\mathcal{P}} \nc{\nca}{\mathcal{N}} \nc{\lca}{\mathcal{L}}
\nc{\oca}{\mathcal{O}} \nc{\mca}{\mathcal{M}} \nc{\tca}{\mathcal{T}}
\nc{\aca}{\mathcal{A}} \nc{\cca}{\mathcal{C}} \nc{\gca}{\mathcal{G}}
\nc{\sca}{\mathcal{S}} \nc{\hca}{\mathcal{H}} \nc{\bca}{\mathcal{B}}
\nc{\dca}{\mathcal{D}} \nc{\val}{\operatorname{val}}
\nc{\vp}{\varphi} \nc{\ddt}{\tfrac{{\rm d}}{{\rm d}t}} \nc{\dds}{\tfrac{{\rm d}}{{\rm d}s}}
\nc{\dpar}{\tfrac{\partial}{\partial t}} \nc{\im}{\mathtt{i}}
\nc{\SO}{\mathrm{SO}} \nc{\Spe}{\mathrm{Sp}} \nc{\Sl}{\mathrm{SL}}
\nc{\SU}{\mathrm{SU}} \nc{\Or}{\mathrm{O}} \nc{\U}{\mathrm{U}} \nc{\Gl}{\mathrm{GL}}
\nc{\Se}{\mathrm{S}} \nc{\Cl}{\mathrm{Cl}} \nc{\Spein}{\mathrm{Spin}}
\nc{\Pin}{\mathrm{Pin}} \nc{\G}{\mathrm{GL}_n(\RR)} \nc{\g}{\mathfrak{gl}_n(\RR)}
\nc{\RR}{{\Bbb R}} \nc{\HH}{{\Bbb H}} \nc{\CC}{{\Bbb C}} \nc{\ZZ}{{\Bbb Z}}
\nc{\FF}{{\Bbb F}} \nc{\NN}{{\Bbb N}} \nc{\QQ}{{\Bbb Q}} \nc{\PP}{{\Bbb P}}
\nc{\vs}{\vspace{.2cm}} \nc{\vsp}{\vspace{1cm}} \nc{\ip}{\langle\cdot,\cdot\rangle}
\nc{\ipp}{(\cdot,\cdot)} \nc{\la}{\left\langle} \nc{\ra}{\right\rangle} \nc{\unm}{\tfrac{1}{2}}
\nc{\unc}{\tfrac{1}{4}} \nc{\und}{\tfrac{1}{16}} \nc{\no}{\vs\noindent}
\nc{\lam}{\Lambda^2(\RR^n)^*\otimes\RR^n} \nc{\tangz}{{\rm T}^{\rm Zar}}
\nc{\nor}{{\sf n}}  \nc{\mum}{/\!\!/} \nc{\kir}{/\!\!/\!\!/}
\nc{\Ri}{\tfrac{4\Ric_{\mu}}{||\mu||^2}} \nc{\ds}{\displaystyle}
\nc{\ben}{\begin{enumerate}} \nc{\een}{\end{enumerate}} \nc{\f}{\frac}
\nc{\lb}{[\cdot,\cdot]} \nc{\isn}{\tfrac{1}{||v||^2}}
\nc{\gkp}{(\ggo=\kg\oplus\pg,\ip)} \nc{\ukh}{(\ug=\kg\oplus\hg,\ip)}
\nc{\tgkp}{(\tilde{\ggo}=\kg\oplus\pg,\ip)}
\nc{\wt}{\widetilde}
\nc{\raw}{\rightarrow} \nc{\lraw}{\longrightarrow}
\nc{\BF}[1]{\delta_{#1}\left(\Ricci_{#1}\right)}
\nc{\Hess}{\operatorname{Hess}} \nc{\ad}{\operatorname{ad}}
\nc{\Ad}{\operatorname{Ad}} \nc{\rank}{\operatorname{rank}}
\nc{\Irr}{\operatorname{Irr}} \nc{\End}{\operatorname{End}}
\nc{\Aut}{\operatorname{Aut}} \nc{\Inn}{\operatorname{Inn}}
\nc{\Der}{\operatorname{Der}} \nc{\Ker}{\operatorname{Ker}}
\nc{\Iso}{\operatorname{I}} \nc{\Diff}{\operatorname{Diff}}
\nc{\Lie}{\operatorname{L}} \nc{\tr}{\operatorname{tr}} \nc{\dif}{\operatorname{d}}
\nc{\sen}{\operatorname{sen}} \nc{\modu}{\operatorname{mod}}
\nc{\Ric}{\operatorname{R}} \nc{\Ricci}{\operatorname{Ric}} \nc{\Riem}{\operatorname{Rm}}
\nc{\vol}{\operatorname{vol}}
\nc{\sym}{\operatorname{sym}} \nc{\symac}{\operatorname{sym^{ac}}}
\nc{\symc}{\operatorname{sym^{c}}} \nc{\scalar}{\operatorname{sc}}
\nc{\grad}{\operatorname{grad}} \nc{\ricci}{\operatorname{Rc}}
\nc{\Nor}{\operatorname{Norm}} \nc{\riccic}{\operatorname{ric^{c}}}
\nc{\riccig}{\operatorname{ric^{\gamma}}} \nc{\Rin}{\operatorname{M}}
\nc{\Le}{\operatorname{L}} \nc{\tang}{\operatorname{T}}
\nc{\level}{\operatorname{level}} \nc{\rad}{\operatorname{r}}
\nc{\abel}{\operatorname{ab}} \nc{\CH}{\operatorname{CH}}
\nc{\mcc}{\operatorname{mcc}} \nc{\Adj}{\operatorname{Adj}}
\nc{\Order}{\operatorname{O}}
\theoremstyle{plain}
\newtheorem{theorem}{Theorem}[section]
\newtheorem{proposition}[theorem]{Proposition}
\newtheorem{corollary}[theorem]{Corollary}
\newtheorem{lemma}[theorem]{Lemma}
\theoremstyle{definition}
\theoremstyle{remark}
\newtheorem{remark}[theorem]{Remark}
\title{Scalar curvature behavior of homogeneous Ricci flows}
\author{Ramiro A. Lafuente}
\address{FaMAF and CIEM, Universidad Nacional de C\'ordoba, C\'ordoba, Argentina}
\email{rlafuente@famaf.unc.edu.ar}
\thanks{This research was partially supported by grants from CONICET (Argentina) and SeCyT (Universidad Nacional de C\'ordoba)}
\subjclass[2010]{53C44, 53C30}
\begin{document}

\maketitle

\begin{abstract}
We prove that the scalar curvature of a homogeneous Ricci flow solution blows up at a forward or backward finite-time singularity.
\end{abstract}


\section{Introduction}

A family of Riemannian metrics $(M^n,g(t))$ is a solution to the unnormalized Ricci flow starting at $g_0$ if it satisfies the evolution equation
\begin{align}
   \label{RF} \frac{\partial }{\partial t} g(t) &= -2 \ricci_{g(t)},\\
    \nonumber            g(0) &= g_0,
\end{align}
introduced by R. Hamilton in \cite{Ham82}. A natural question to ask for such a solution is what are the optimal geometric quantities that can control the formation of singularities, in the sense that if they remain bounded along an interval $[0,T)$ then the solution can be smoothly extended past time $T$ (see \cite{Knopf}).

 In the compact case, Hamilton proved in \cite{Ham95} that if the flow develops a singularity at a finite time $\omega$, then the norm of the Riemann curvature tensor $|\Riem(g(t))|$ is unbounded on $M\times [0,\omega)$. This was improved by N. Sesum in \cite{Ses}, who showed that $|\ricci (g(t))|$ must be unbounded on $M\times [0,\omega)$, where $\ricci (g(t))$ denotes the Ricci curvature. Furthermore, B. Wang proved in \cite{Wang} that a lower bound for the Ricci curvature together with an integral (rather than pointwise) bound for the scalar curvature of the form $\int_0^T \int_M |R|^\alpha \rm d \vol_{g(t)} \rm d t \leq C$, $\alpha\geq \tfrac{n+2}2$, allow one to extend the flow past time $T$.

It is expected that the scalar curvature $R(g(t))$ should also be unbounded in the presence of a finite-time singularity, however this question remains an open problem in the general case. In this direction, J. Enders, R. M\"uller and P. Topping proved in \cite{EMT} that if the singularity is of Type-I, then the scalar curvature must indeed blow up. It was shown by N. Le and N. Sesum in \cite{LeSes} that actually an integral scalar curvature bound is enough (in the compact case) in order to prevent Type-I singularities. On the other hand, in \cite{Zhang} Z. Zhang showed that the scalar curvature blows up at the first singular time of the K\"ahler Ricci flow.

The main purpose of this article is to prove the following theorem, which confirms the expected behavior for the scalar curvature in the case of homogeneous manifolds.
\begin{theorem}\label{main}
Let $(M,g(t))$ be a Ricci flow solution with maximal interval of definition $(\alpha,\omega)$, and assume that $(M,g(t))$ is homogeneous for all $t\in (\alpha,\omega)$. Let $R(g(t))$ denote the (constant on $M$) scalar curvature of $g(t)$.
\begin{itemize}
    \item[(i)] If $\omega<\infty$, then $R(g(t)) \underset{t\raw \omega}\lraw \infty.$
    \item[(ii)] If $\alpha > -\infty$, then $R(g(t)) \underset{t\raw \alpha}\lraw -\infty.$
\end{itemize}
\end{theorem}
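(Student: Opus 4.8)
The plan is to pass to Lauret's bracket flow and couple the monotonicity of the scalar curvature with a single integral estimate. Since the solution is homogeneous, $R(g(t))$ is constant on $M$, so $\Delta R\equiv 0$ and the general evolution equation for the scalar curvature collapses to the ODE
\[
  \frac{d}{dt}R(g(t)) = 2\,|\ricci_{g(t)}|^2 \ge 0 .
\]
Thus $R(t):=R(g(t))$ is non-decreasing and has (possibly infinite) monotone limits at the endpoints of $(\alpha,\omega)$. In particular the theorem reduces to the following extension claim: if $\omega<\infty$ and $R$ is bounded above on some $[t_0,\omega)$, then the flow extends past $\omega$; symmetrically, if $\alpha>-\infty$ and $R$ is bounded below on some $(\alpha,t_0]$, then it extends before $\alpha$. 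Granting this, maximality of $(\alpha,\omega)$ forces $R$ to be unbounded above as $t\to\omega^-$ (resp. unbounded below as $t\to\alpha^+$), and monotonicity upgrades ``unbounded'' to the stated limits $+\infty$ (resp. $-\infty$).

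To prove the forward claim I would first note that if $R\le C$ on $[t_0,\omega)$ then, integrating the displayed identity and using that $R$ has a finite monotone limit,
\[
  \int_{t_0}^{\omega} 2\,|\ricci_{g(t)}|^2\,dt = \lim_{t\to\omega^-}R(t)-R(t_0) \le C-R(t_0) < \infty .
\]
Because $\omega<\infty$, the interval $[t_0,\omega)$ has finite length, so Cauchy--Schwarz upgrades this $L^2$-in-time bound to an $L^1$-in-time bound:
\[
  \int_{t_0}^{\omega} |\ricci_{g(t)}|\,dt \le (\omega-t_0)^{1/2}\Big(\int_{t_0}^{\omega}|\ricci_{g(t)}|^2\,dt\Big)^{1/2} < \infty .
\]
This is the single place where finiteness of the singular time is essential.

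Now I work in the bracket flow $\tfrac{d}{dt}\mu = \delta_\mu(\Ricci_\mu)$, whose maximal interval coincides with that of $g(t)$ and whose Ricci operator satisfies $\|\Ricci_\mu\| = |\ricci_{g(t)}|$. Since $\delta_\mu(A)=-\pi(A)\mu$ is linear in $A$ with $\|\delta_\mu(A)\|\le c_n\|A\|\,\|\mu\|$, the bracket obeys $\|\tfrac{d}{dt}\mu\| \le c_n\,|\ricci_{g(t)}|\,\|\mu\|$, whence
\[
  \Big|\frac{d}{dt}\log\|\mu(t)\|\Big| \le c_n\,|\ricci_{g(t)}| .
\]
Integrating and inserting the $L^1$ bound above shows $\log\|\mu(t)\|$ stays bounded on $[t_0,\omega)$, so $\mu(t)$ remains in a fixed compact subset of the finite-dimensional space of brackets. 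As $\Ricci_\mu$ is a fixed quadratic polynomial in $\mu$, the bracket flow is the restriction of a polynomial vector field, and the ODE escape lemma then extends $\mu(t)$, hence $g(t)$, past $\omega$ --- contradicting maximality. The backward claim is entirely symmetric: $R$ bounded below and $\alpha>-\infty$ give $\int_\alpha^{t_0}|\ricci|<\infty$, hence $\|\mu\|$ bounded on $(\alpha,t_0]$ and the same contradiction.

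The main obstacle, and the reason a pointwise argument cannot succeed, is that bounded scalar curvature does not bound the curvature operator: the eigenvalues of $\ricci$ may diverge with cancelling signs in $R=\tr\ricci$, so $\|\mu\|$ could a priori blow up while $R$ stays bounded. The device that circumvents this is to control $\|\mu\|$ in an integrated-in-time rather than pointwise fashion, converting the finite length of the singular interval into the missing $L^1$-in-time control of $|\ricci|$ via Cauchy--Schwarz. As a complementary remark, the algebraic inequality $|\ricci|^2\ge\tfrac1n R^2$ promotes the monotonicity to $\tfrac{d}{dt}R\ge\tfrac2n R^2$, which yields the a priori Type-I upper bound $R(t)\le\tfrac{n}{2(\omega-t)}$ in case (i), complementing the blow-up proved above.
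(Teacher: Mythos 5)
Your proposal is correct and follows essentially the same route as the paper: both pass to the bracket flow, control $\log|\mu(t)|$ by $\int|\Ricci_{\mu}|\,{\rm d}t$ via the linearity of $\pi(A)\mu$ in $A$, and then use $\frac{\rm d}{{\rm d}t}R = 2\tr\Ricci_\mu^2$ together with the finiteness of $\omega$ to convert the $L^2$-in-time information from the scalar curvature into the needed $L^1$-in-time bound (you via Cauchy--Schwarz, the paper via $|\Ricci|\le\tfrac12(1+|\Ricci|^2)$). The only difference is organizational: you argue contrapositively (bounded $R$ implies $\mu$ stays compact, hence extendable), while the paper argues directly that $|\mu(t)|\to\infty$ forces $\int_0^\omega|\Ricci_{\mu}|\,{\rm d}s=\infty$ and hence $R\to\infty$ --- the same ODE escape fact used in opposite directions.
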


Our main tool is an evolution equation for a curve of Lie brackets introduced in \cite{homRF} (called the \emph{bracket flow}), defined on the variety of Lie algebras of a certain fixed dimension, which turns out to be equivalent in a very precise sense to the Ricci flow of homogeneous manifolds. In fact, our proof of Theorem \ref{main} in Section \ref{scalar} relies only on the equivalence between homogeneous Ricci flow and the bracket flow (see Theorem \ref{homRF-BF}), and does not make use of any previous result from general Ricci flow theory.

We also study some properties of bracket flow solutions which are of independent interest, as well as some other links between the scalar curvature and the interval of definition of the flow. In addition, as an application of Theorem \ref{main}, we prove that any homogeneous Ricci flow solution on a manifold whose universal cover is diffeomorphic to $\RR^n$ is immortal (i.e. defined for all $t\in [0,\infty)$). This in particular applies to homogenenous Ricci flow solutions on solvmanifolds.

\vs \noindent {\it Acknowledgements.} I would like to thank my PhD advisor Jorge Lauret for his encouragement and support, and for his many helpful comments and suggestions.

\section{Preliminaries}

A Riemannian manifold $(M,g)$ is said to be \emph{homogeneous} if the isometry group $\Iso(M,g)$ acts transitively on $M$. If $M$ is connected (which we will always assume unless otherwise stated), each transitive, closed Lie subgroup $G\subseteq \Iso(M,g)$ gives rise to a presentation of $(M,g)$ as a homogeneous space with a $G$-invariant metric $(G/K, g)$ (a \emph{Riemannian homogeneous space}), where $K$ is the isotropy subgroup of $G$ at some point $o\in M$.

The presence of a $G$-invariant metric on $G/K$ implies the existence of a \emph{reductive} (i.e. $\Ad(K)$-invariant) decomposition $\ggo = \kg \oplus \pg$, where $\ggo$ and $\kg$ are respectively the Lie algebras of $G$ and $K$. Indeed, we can take $\pg$ as the orthogonal complement to $\kg$ with respect to the Killing form $B$ of $\ggo$ (see \cite[Lemma 2.1]{homRS}). Once such a decomposition has been chosen, we have the natural identification $\pg \equiv T_o M = T_{eK} G/K$, and any $G$-invariant metric on $G/K$ is determined by an $\Ad(K)$-invariant inner product $\ip$ on $\pg$.

In this situation, since we have assumed that $G\subseteq \Iso(M,g)$, we always have that the action of $G$ on $G/K$ is effective. However, in order to get a presentation of $(M,g)$ as a Riemannian homogeneous space $(G/K,g)$, it suffices to have an \emph{almost-effective} action (i.e. the normal subgroup $\{g\in G : ghK = hK, \forall h\in G \}$ is discrete). Moreover, it is easy to see that from a reductive decomposition of the Lie algebra $\ggo = \kg\oplus\pg$ and an $\Ad(K)$-invariant inner product $\ip$ on $\pg$, we can recover the Riemannian homogeneous space $(G/K,g)$, provided we choose $G$ simply connected (which we can always do without loosing almost-effectiveness).

\subsection{The space of homogeneous manifolds}\label{spacehmsection}
We will now describe a framework, developed in \cite{spacehm}, which is central in this paper because it allows us to work on the 'space of homogeneous manifolds'. It consists in parameterizing the set of $n$-dimensional homogeneous spaces with $q$-dimensional isotropy by a subset $\hca_{q,n}$ of the variety of $(q+n)$-dimensional Lie algebras.

Fix a real vector space $\ggo$ of dimension $q+n$ together with a direct sum decomposition $\ggo = \kg \oplus \pg$, where $\kg$ and $\pg$ are subspaces of dimension $q$ and $n$, respectively. Also, fix an inner product $\ip$ on $\pg$. Let $V_{q+n}$ be the space of all skew-symmetric algebra structures (or brackets) on $\ggo$, that is,
 \[
V_{q+n}:=\{\mu:\ggo\times\ggo\longrightarrow\ggo : \mu\; \mbox{bilinear and
skew-symmetric}\}.
\]
We can associate to a bracket $\mu\in V_{q+n}$ an $n$-dimensional homogeneous space with $q$-dimensional isotropy provided that the following conditions on $\mu$ are satisfied:
\begin{itemize}
    \item[(h1)] $\mu$ satisfies the Jacobi condition (i.e. it is a Lie bracket on $\ggo$), $\mu(\kg,\kg)\subset\kg$ and $\mu(\kg,\pg)\subset\pg$.

    \item[(h2)] If $G_\mu$ denotes the simply connected Lie group with Lie algebra $(\ggo,\mu)$ and $K_\mu$ is the connected Lie subgroup of $G_\mu$ with Lie algebra $\kg$, then $K_\mu$ is closed in $G_\mu$.

    \item[(h3)] $\ip$ is $\ad_{\mu}{\kg}$-invariant (i.e. $(\ad_{\mu}{Z}|_{\pg})^t=-\ad_{\mu}{Z}|_{\pg}$ for all $Z\in\kg$).

    \item[(h4)] $\{ Z\in\kg:\mu(Z,\pg)=0\}=0$.
\end{itemize}
(see \cite{spacehm} for further details). The subset $\hca_{q,n} \subseteq V_{q+n}$ is precisely the set of those $\mu$'s that satisfy the conditions, i.e.
\[
\hca_{q,n} = \{ \mu \in V_{q+n} : \mu \mbox{ satisfies the conditions } (\rm h1)-(\rm h4)\}.
\]

We can therefore associate to any $\mu \in \hca_{q,n}$ a Riemannian homogeneous space $(G_\mu/K_\mu, g_\mu)$, where $G_\mu$ is the simply connected Lie group with Lie algebra $(\ggo,\mu)$, $K_\mu$ is the connected Lie subgroup of $G_\mu$ with Lie algebra $(\kg, \mu|_{\kg\times\kg})$, which is closed by (h2), and $g_\mu$ is the $G_\mu$-invariant metric on $G_\mu/ K_\mu$ defined by the inner product $\ip$ on $\pg$ (i.e. $g_\mu (eK_\mu) = \ip$; recall that we identify $T_{eK_\mu} G_\mu/K_\mu$ with $\pg$).

If we extend the inner product $\ip$ on $\pg$ to an inner product on $\ggo$ (which for simplicity we will also denote by $\ip$), such that $\langle \kg,\pg\rangle = 0$, then we can endow $V_{q+n}$ with a natural inner product, defined by
\[
\langle \mu, \lambda \rangle = \sum_{i,j,k} \langle \mu(X_i,X_j),X_k \rangle \langle \lambda(X_i,X_j),X_k\rangle,
\]
where $\{ X_i \}$ is any orthonormal basis of $(\ggo,\ip)$. Thus we can refer to the norm of a bracket $\mu$ with respect to this inner product, which we will denote by $|\mu|$. This will be a very important feature in our study of homogeneous Ricci flow.

\subsection{The Ricci flow of homogeneous manifolds and the bracket flow}\label{homRF-BF}
We present here our main tool, introduced in \cite{homRF}, which is also the object of study of this article: the bracket flow. It is an ODE defined in the set $\hca_{q,n}$, which turns out to be equivalent in a very precise sense to the Ricci flow of homogeneous manifolds. We refer the reader to \cite[Section 3]{homRF} for a more detailed treatment.

Let $(M,g(t))$ be a Ricci flow solution as in \eqref{RF}. If the initial metric $g(0)=g_0$ is homogeneous, it can be easily proved that there exists a solution to \eqref{RF} which consists of homogeneous metrics, because the homogeneity allows us to reduce the problem to an ODE for a curve of inner products on the tangent space at some point $p\in M$. The general Ricci flow theory therefore implies that this solution $g(t)$ is unique among complete, bounded curvature metrics (see \cite{Shi}, \cite{ChenZhu}). Moreover, in the homogeneous case it is not hard to see that one must have $\Iso(M,g(t)) = \Iso(M,g_0)$ for all $t$ (this was proved in \cite{Kotsch} in the general case).

To sum up, for a homogenous metric $g_0$ on $M$ one always has a homogenous solution $g(t)$ to \eqref{RF} such that $g(0) = g_0$, defined on some maximal interval of time $(\alpha,\beta)$ with $0\in(\alpha,\beta)$, and the isometry group is preserved along the flow.

The preservation of the isometry group implies that a $G$-invariant metric remains $G$-invariant along the Ricci flow, thus we can study its evolution by restricting ourselves to $G$-invariant metrics on a fixed homogeneous space $G/K$, and this takes us directly into the space $\hca_{q,n}$. In order to understand how the Ricci flow looks in $\hca_{q,n}$, we consider an evolution equation defined in $V_{q+n}$. We say that a smooth curve $\mu(t)$ in $V_{q+n}$ is a \emph{bracket flow} solution if it satisfies
\begin{equation}\label{BF}
\frac{{\rm d}}{{\rm d}t} \mu = -\pi \left( \left[\begin{smallmatrix} 0 & 0\\ 0 & \Ricci_\mu \end{smallmatrix} \right]\right) \mu,
\end{equation}
where the blocks are according to the decomposition $\ggo = \kg \oplus \pg$, $\pi : \glg_{q+n}(\RR) \lraw \End(V_{q+n})$ is the usual representation given by
\[
\pi(A)\mu = A\mu(\cdot,\cdot) - \mu(A\cdot,\cdot) - \mu(\cdot,A\cdot),
\]
and $\Ricci_\mu\in \End(\pg)$ is defined as in \cite[\S 2.3]{homRF} and coincides with the Ricci operator of the Riemannian homogeneous space $(G_\mu/K_\mu, g_\mu)$ when $\mu\in \hca_{q,n}$.

It is easy to see that if $\mu_0 = \mu(0) \in \hca_{q,n}$, then $\mu(t) \in \hca_{q,n}$ for all $t$. Thus by considering solutions to \eqref{RF} and \eqref{BF} we have a priori two families of Riemannian homogeneous spaces
\[
(G/K,g(t)), \qquad (G_{\mu(t)}/K_{\mu(t)}, g_{\mu(t)}).
\]
The next result shows that bracket flow solutions differ from Ricci flow solutions only by pull-back by time-dependent diffeomorphisms.
\begin{theorem}\cite{homRF}\label{RF-BF}
Let $\mu_0 \in \hca_{q,n}$, and consider $(G_{\mu_0}/K_{\mu_0}, g_{\mu_0})$ the corresponding Riemannian homogeneous space. Let $g(t), \mu(t)$ be the solutions to \eqref{RF}, \eqref{BF} with values at $t=0$ given by $g_{\mu_0}$, $\mu_0$, respectively. Then $g(t)$ and $\mu(t)$ have the same maximal interval of definition, and there exist time-dependent, equivariant diffeomorphisms $\varphi(t) : G_{\mu_0}/K_{\mu_0} \lraw G_{\mu(t)}/K_{\mu(t)}$ such that
\[
    g(t) = \varphi(t)^*g_{\mu(t)}.
\]
\end{theorem}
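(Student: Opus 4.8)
The plan is to solve the bracket flow first and then realise the diffeomorphisms $\varphi(t)$ as the geometric shadow of the gauge transformations carrying $\mu_0$ to $\mu(t)$. Abbreviate $P_\mu:=\left[\begin{smallmatrix}0&0\\0&\Ricci_\mu\end{smallmatrix}\right]\in\glg_{q+n}(\RR)$, so that \eqref{BF} reads $\ddt\mu=-\pi(P_\mu)\mu$. As $\Ricci_\mu$ depends smoothly on $\mu$ (for the fixed $\ip$), ODE theory yields a unique maximal solution $\mu(t)\in V_{q+n}$, $t\in(\alpha',\omega')$, with $\mu(0)=\mu_0$; that $\mu(t)$ stays in $\hca_{q,n}$ — i.e. that $(\rm h1)$--$(\rm h4)$ persist — is routine and I would only indicate why, the key point being that $\mu(t)$ remains in the $\Gl$-orbit of the Lie bracket $\mu_0$.

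First I would integrate the gauge. Solving the linear time-dependent ODE $\ddt\tilde h=-P_{\mu(t)}\tilde h$ with $\tilde h(0)=I$ produces an invertible $\tilde h(t)\in\Gl(\ggo)$ on all of $(\alpha',\omega')$; since $P_\mu$ is block diagonal with vanishing $\kg$-block, $\tilde h(t)=\left[\begin{smallmatrix}I&0\\0&h(t)\end{smallmatrix}\right]$ with $h(t)\in\Gl(\pg)$ solving $\ddt h=-\Ricci_{\mu(t)}h$. Setting $\nu(t):=\tilde h(t)\cdot\mu_0$, where $\cdot$ is the natural $\Gl(\ggo)$-action on $V_{q+n}$ (whose differential is $\pi$), one computes $\ddt\nu=\pi(\dot{\tilde h}\,\tilde h^{-1})\nu=-\pi(P_{\mu(t)})\nu$; since $\mu(t)$ solves the very same linear equation with the same value at $t=0$, uniqueness gives $\mu(t)=\tilde h(t)\cdot\mu_0$. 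Thus $\tilde h(t)\colon(\ggo,\mu_0)\raw(\ggo,\mu(t))$ is a Lie algebra isomorphism fixing $\kg$, so by simple connectedness it integrates to a Lie group isomorphism $G_{\mu_0}\raw G_{\mu(t)}$ mapping $K_{\mu_0}$ onto $K_{\mu(t)}$, hence descends to an equivariant diffeomorphism $\varphi(t)\colon G_{\mu_0}/K_{\mu_0}\raw G_{\mu(t)}/K_{\mu(t)}$ whose differential at the base point is $h(t)$.

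The heart of the matter is then $g(t)=\varphi(t)^*g_{\mu(t)}$. Because the isometry group is preserved along the flow, the Ricci flow starting at $g_{\mu_0}$ stays $G_{\mu_0}$-invariant and reduces to the ODE $\ddt\ip_t=-2\,\ricci_{g(t)}$ for a curve of $\Ad(K_{\mu_0})$-invariant inner products $\ip_t$ on $\pg$. Equivariance of $\varphi(t)$ shows that $\varphi(t)^*g_{\mu(t)}$ is the $G_{\mu_0}$-invariant metric determined by the inner product $\langle h(t)\cdot,h(t)\cdot\rangle$ on $\pg$. Differentiating and using symmetry of the Ricci operator gives $\ddt\langle h(t)\cdot,h(t)\cdot\rangle=-2\langle\Ricci_{\mu(t)}h(t)\cdot,h(t)\cdot\rangle$, while naturality of Ricci curvature under $\varphi(t)$ — together with the defining property that $\Ricci_{\mu(t)}$ is the genuine Ricci operator of $(G_{\mu(t)}/K_{\mu(t)},g_{\mu(t)})$ — identifies the right-hand side with $-2\,\ricci_{\langle h(t)\cdot,h(t)\cdot\rangle}$. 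Hence $\langle h(t)\cdot,h(t)\cdot\rangle$ solves the homogeneous Ricci flow ODE with value $\ip$ at $t=0$ (as $h(0)=I$), so by uniqueness it equals $\ip_t$, i.e. $\varphi(t)^*g_{\mu(t)}=g(t)$.

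For the interval statement, the above turns the bracket flow on $(\alpha',\omega')$ into a Ricci flow on the same interval, whence $(\alpha,\omega)\supseteq(\alpha',\omega')$; running the identical computation backwards — defining $h(t)$ from the given Ricci flow by $\ddt h=-\Ricci_{g(t)}h$ and setting $\mu(t):=\tilde h(t)\cdot\mu_0$, then verifying it satisfies \eqref{BF} — produces a bracket flow on $(\alpha,\omega)$ and the reverse inclusion, so the maximal intervals coincide. I expect the main obstacle to be the naturality step of the third paragraph: one must match the purely algebraic $\pi(P_\mu)\mu$ with the geometric $-2\,\ricci$ across the gauge change, and keeping the $h$ versus $h^{-1}$ conjugations straight there is where care is required.
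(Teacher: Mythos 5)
Your argument is correct and is essentially the proof given in the cited source: the present paper only quotes Theorem \ref{RF-BF} from \cite{homRF} without proof, and the proof there is exactly this gauge construction — integrate $h(t)\in\Gl(\pg)$ from the (time-dependent, linear) ODE driven by the Ricci operator, identify $\mu(t)=\tilde h(t)\cdot\mu_0$ by uniqueness, and transfer the metric via the induced equivariant diffeomorphisms using the equivariance $\Ricci_{\mu(t)}=h(t)\,\Ricci(\ip_t)\,h(t)^{-1}$. The only point worth making explicit is the one you already flag implicitly: identifying the maximal interval of the homogeneous Ricci flow with that of the reduced ODE for inner products rests on the uniqueness of complete bounded-curvature solutions (Chen--Zhu), as set up in Section 2.2 of the paper.
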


This theorem allows us to address questions about the geometry and interval of definition of homogeneous Ricci flow by means of the bracket flow.

In all of what follows, unless otherwise stated, whenever we speak of an interval of definition for a flow (either Ricci or bracket) of the form $(\alpha,\omega)$, we will always mean its \emph{maximal} interval of definition. Moreover, if we say that a solution develops a singularity in finite time $\omega>0$, we mean that the solution to \eqref{RF} or \eqref{BF} has an interval of definition of the form $(\alpha,\omega)$, with $-\infty \leq \alpha < 0$ and $\omega < \infty$ (and analogously for a 'backward' singularity in finite time $\alpha<0$). Finally, a solution with interval of existence of the form $(\alpha,\infty)$ is called \emph{immortal}, one with $(-\infty,\omega)$ is called \emph{ancient}, and one with $(-\infty,\infty)$ is called \emph{eternal}, as usual.

\section{The bracket norm along a bracket flow solution}

In this section we will discuss the behavior of the bracket norm of a bracket flow solution having a finite (forward or backward) time of extinction. Our setting here is the one explained in Sections \ref{spacehmsection} and \ref{homRF-BF}. Recall that, since we have extended the inner product to $\ggo$, we have a natural inner product on $V_{q+n}$, and hence on $\hca_{q,n}$.

We begin with a simple but useful estimate.

\begin{lemma}\label{estimate1} If $\mu(t)$ is a bracket flow solution, then,
\[
    \left|\ddt \mu \right| \leq C |\Ricci_\mu | |\mu| \leq  \tilde{C} |\mu|^3,
\]
where $C, \tilde{C}$ are constants that only depend on $n$.
\end{lemma}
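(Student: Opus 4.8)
The plan is to treat the two inequalities separately: the first is a soft estimate reflecting the boundedness of the fixed linear representation $\pi$, while the second is the substantive bound coming from the explicit polynomial form of the Ricci operator.

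For the first inequality, I would start from the bracket flow equation \eqref{BF}, writing $R_\mu := \left[\begin{smallmatrix} 0 & 0\\ 0 & \Ricci_\mu\end{smallmatrix}\right] \in \End(\ggo)$, so that $\ddt\mu = -\pi(R_\mu)\mu$. The key observation is that for \emph{any} $A\in\glg_{q+n}(\RR)$ one has the operator bound
\[
|\pi(A)\mu| \leq 3\, |A|_{op}\, |\mu|,
\]
where $|A|_{op}$ denotes the operator norm of $A$ on $(\ggo,\ip)$. This follows by estimating each of the three terms in $\pi(A)\mu = A\mu(\cdot,\cdot) - \mu(A\cdot,\cdot) - \mu(\cdot,A\cdot)$ separately: post-composition by $A$ multiplies the norm $|\cdot|$ of $V_{q+n}$ by at most $|A|_{op}$, and the same holds for pre-composition in either slot, as one checks by fixing the other argument and using submultiplicativity $|BA|_{HS}\leq |B|_{HS}|A|_{op}$ relative to the orthonormal basis $\{X_i\}$. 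Applying this with $A = R_\mu$, and noting that $|R_\mu|_{op} = |\Ricci_\mu|_{op} \leq |\Ricci_\mu|$ (operator norm is dominated by the norm used on $V_{q+n}$), yields $|\ddt\mu| \leq C\,|\Ricci_\mu|\,|\mu|$.

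For the second inequality, the point is that the Ricci operator $\Ricci_\mu$ is a homogeneous polynomial of degree two in the bracket $\mu$. Concretely, I would invoke the expression for $\Ricci_\mu$ recalled in \cite[\S 2.3]{homRF}, in which $\Ricci_\mu$ is written as a sum of a term $M_\mu$ quadratic in the structure constants of $\mu$, a Killing-form term built from $B_\mu(X,Y) = \tr(\ad_\mu X\,\ad_\mu Y)$, and a mean-curvature term $S(\ad_\mu H_\mu)$ with $\langle H_\mu, X\rangle = \tr\ad_\mu X$; each of these is quadratic in $\mu$, since $H_\mu$ itself depends linearly on $\mu$. Estimating each summand by Cauchy--Schwarz over the finitely many basis indices then gives $|\Ricci_\mu| \leq c\,|\mu|^2$ with $c$ depending only on the dimension, and combining with the first inequality produces the stated $\tilde C\,|\mu|^3$ bound.

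The only genuinely substantive step is this quadratic bound on $\Ricci_\mu$, which rests entirely on having the explicit polynomial formula for the Ricci operator of a homogeneous space at hand; the first inequality is purely formal. The one place demanding a little care is the bookkeeping relating the intrinsic norm $|\cdot|$ on $V_{q+n}$ (defined via an orthonormal basis) to operator and Hilbert--Schmidt norms of endomorphisms of $\ggo$, but this affects only the values of the dimensional constants $C$ and $\tilde C$, not the form of the estimate.
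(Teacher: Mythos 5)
Your proposal is correct and follows essentially the same route as the paper: the first inequality comes from the bilinearity of $(A,\mu)\mapsto\pi(A)\mu$ (which you make explicit via the $3|A|_{op}|\mu|$ bound), and the second from the fact that the entries of $\Ricci_\mu$ are homogeneous quadratic polynomials in the structure constants of $\mu$, giving $|\Ricci_\mu|\leq C_1|\mu|^2$. The paper states both steps in two lines without the bookkeeping; your version just fills in the details.
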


\begin{proof}
We see that $\pi(A)\mu$ is linear in $\mu$, thus the first inequality follows from the definition of a bracket flow solution. On the other hand, by using that each entry of $\Ricci_\mu$ (or, more precisely, of its corresponding matrix with respect to an orthonormal basis) is a homogeneous quadratic polynomial, we get
\[
|\Ricci_\mu | \leq C_1 |\mu|^2,
\]
and the second inequality follows.
\end{proof}

From standard ODE theory, it is clear that in the presence of a finite-time singularity at time $\omega<\infty$, there exists a sequence $t_k \raw \omega$ such that $|\mu(t_k)| \raw \infty$. The next result improves this fact by showing that, actually, $|\mu(t)|\underset{t\raw\omega}\lraw \infty$. An analogous result also holds in the case of backward, finite-time singularities.

\begin{proposition}\label{cotainfmu}
Let $\mu(t)$ be a bracket flow solution with maximal interval of definition $(\alpha, \omega)$. Then there exists a constant $C = C(n)>0$, such that:
\begin{itemize}
    \item[(i)] If $\omega<\infty$, then
        \[
        \left| \mu(t)\right| \geq \frac{C}{(\omega-t)^{1/2}}, \qquad \forall t\in [0,\omega).
        \]
    \item[(ii)] If $\alpha>-\infty$, then
        \[
        \left| \mu(t)\right| \geq \frac{C}{(t-\alpha)^{1/2}}, \qquad \forall t\in (\alpha,0].
        \]
\end{itemize}
\end{proposition}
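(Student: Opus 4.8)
The plan is to turn the differential inequality from Lemma~\ref{estimate1} into a lower bound for the blow-up rate of $|\mu(t)|$, using the fact that $|\mu(t)|$ must become unbounded as $t \to \omega$. The key observation is that the quantity $|\mu|$ cannot grow faster than the cubic bound $\left|\ddt\mu\right| \leq \tilde C |\mu|^3$ permits, and a solution of the associated Riccati-type ODE $y' = \tilde C y^3$ blows up in finite time with rate exactly $(\omega - t)^{-1/2}$. So the strategy is to compare $|\mu(t)|$ against this extremal solution.

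**First I would** estimate the time derivative of $|\mu(t)|$ itself. Since $|\mu|$ is (at least where nonzero) differentiable, we have
\[
\left| \ddt |\mu(t)| \right| \leq \left| \ddt \mu(t) \right| \leq \tilde C |\mu(t)|^3,
\]
the first inequality coming from Cauchy--Schwarz applied to $\ddt |\mu|^2 = 2\langle \ddt\mu, \mu\rangle$, and the second from Lemma~\ref{estimate1}. Writing $y(t) = |\mu(t)|$, this gives the scalar differential inequality $y'(t) \leq \tilde C\, y(t)^3$. I would then integrate: dividing by $y^3$ and integrating from $t$ to $s$ (for $t < s < \omega$) yields
\[
-\tfrac12\, y(s)^{-2} + \tfrac12\, y(t)^{-2} \leq \tilde C (s-t),
\]
that is, $y(t)^{-2} \leq y(s)^{-2} + 2\tilde C(s-t)$.

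**Next**, to extract the claimed bound I would let $s \to \omega$. By standard ODE theory (quoted in the excerpt right before the proposition), there is a sequence $s_k \to \omega$ with $y(s_k) = |\mu(s_k)| \to \infty$, so $y(s_k)^{-2} \to 0$. Applying the inequality above along this sequence and passing to the limit gives
\[
y(t)^{-2} \leq 2\tilde C\,(\omega - t), \qquad \text{i.e.} \qquad |\mu(t)| = y(t) \geq \frac{1}{\sqrt{2\tilde C}\,(\omega - t)^{1/2}},
\]
which is exactly the statement of part~(i) with $C = (2\tilde C)^{-1/2}$, a constant depending only on $n$ through $\tilde C$. Part~(ii) follows by the same argument applied backward in time: one integrates from $\alpha < s < t$ and uses the divergence of $|\mu|$ along a sequence $s_k \to \alpha^+$.

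**The main technical obstacle** is the differentiability of $t \mapsto |\mu(t)|$, which can fail at zeros of $\mu$. This is minor but needs care: I would observe that on the interval $[0,\omega)$ one cannot have $\mu(t) \equiv 0$ (the solution is nontrivial since it blows up), and in fact it is cleaner to work throughout with $f(t) = |\mu(t)|^2 = \langle \mu(t), \mu(t)\rangle$, which is smooth. From $f' = 2\langle \ddt\mu, \mu\rangle$ and Lemma~\ref{estimate1} one gets $|f'| \leq 2\tilde C |\mu|^4 = 2\tilde C f^2$, and integrating $f'/f^2 \leq 2\tilde C$ reproduces the same estimate while sidestepping the non-smoothness of the norm entirely. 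The only point requiring a word of justification is that $f(t) > 0$ for all $t$ in the relevant interval, which holds because $f$ satisfies a linear-in-$f$ bound $|f'| \leq 2\tilde C f^2$ and hence cannot reach zero in finite time from a positive value (equivalently, $\mu(t) = 0$ would be a fixed point of the bracket flow, contradicting the blow-up).
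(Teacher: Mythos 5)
Your proposal is correct and follows essentially the same route as the paper: both integrate the differential inequality $\ddt|\mu|^2\le C|\mu|^4$ coming from Lemma~\ref{estimate1} and combine it with the standard ODE fact that $|\mu|$ must be unbounded near the finite maximal time, the only cosmetic difference being that you pass to the limit along a blow-up sequence $s_k\to\omega$ while the paper phrases the same computation as a lower bound on the blow-up time from a comparison ODE. Your extra remarks on the differentiability of $|\mu(t)|$ and the positivity of $|\mu(t)|^2$ are sound (and are sidestepped in the paper by working with $|\mu|^2$ from the start).
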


\begin{proof}
We will focus on proving only part (i), since part (ii) follows analogously by reversing the sign of the time variable.

Let $t_0 \in (\alpha,\omega)$. From Lemma \ref{estimate1} we have that
\[
\ddt |\mu |^2 = 2\la \mu, \ddt \mu \ra \leq C |\mu |^4, \qquad t\in [t_0,\omega).
\]
(actually we may take the same constant $C$ for all $t_0\in [0,\omega)$). So by comparison we obtain that $|\mu(t)| \leq f(t)$,  for all $t\in [t_0,\omega)$, where $f$ is given by
\[
 f' = C f^2, \quad f(t_0) = | \mu(t_0) |^2.
\]
Thus,
\[
| \mu(t) |^2 \leq \frac1{-C(t-t_0) + |\mu(t_0)|^{-2}},
\]
and then $|\mu(t)|$ cannot blow up before the time $t = t_0 + \frac1C \left| \mu(t_0)\right|^{-2}$. This implies that $\omega \geq t_0 + \frac1C \left| \mu(t_0)\right|^{-2}$, and the result follows.
\end{proof}

\begin{remark}
If we were able to obtain also an estimate of the form $|\mu(t)| \leq \tfrac{C}{(\omega-t)^{1/2}}$, which is the case in all known examples, then it can be shown that the corresponding Ricci flow solution $g(t)$ develops a Type-I singularity. Indeed, in that case we can argue as in \cite[Section 6.2]{nilRF} to obtain
\[
   | \Riem(g(t))| =  | \Riem(\mu(t))| \leq C_n |\mu(t)|^2  \leq \frac{C}{\omega-t}.
\]
\end{remark}

\section{Scalar curvature and the interval of definition of the flow}\label{scalar}

The aim of this section is to deal with the interplay between the scalar curvature and the maximal interval of definition of a homogeneous Ricci flow solution, by proving Theorem \ref{main} and the following proposition.

\begin{proposition}
Let $(M,g(t))$, $g(0) = g_0$, be a Ricci flow solution with maximal interval of definition $(\alpha, \omega)$, $\alpha<0<\omega$, such that for each $t\in(\alpha,\omega)$ the scalar curvature $R(g(t))$ is constant on $M$ (which holds in particular if $g(t)$ is homogeneous).
\begin{enumerate}\label{signoR}
  \item[(i)] If $R(g_0)>0$, then $\omega \leq \frac{n}2 R(g_0)^{-1}$.
  \item[(ii)] If $R(g_0)<0$, then $\alpha \geq \frac{n}2 R(g_0)^{-1}$.
\end{enumerate}
\end{proposition}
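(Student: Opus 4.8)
The plan is to reduce both statements to a scalar ODE for $R(g(t))$ regarded as a function of $t$ alone, which is legitimate precisely because $R$ is assumed constant on $M$. The starting point is the standard evolution equation for the scalar curvature under \eqref{RF},
\[
\frac{\partial}{\partial t} R = \Delta R + 2|\ricci|^2 .
\]
Since $R(g(t))$ is spatially constant for every $t$, its Laplacian $\Delta R$ vanishes, so along the flow $R$ satisfies the genuine ODE $\ddt R = 2|\ricci|^2$. I would then use that $R$ is the trace of the Ricci operator together with the Cauchy--Schwarz inequality $R^2 \leq n\,|\ricci|^2$ to obtain the differential inequality
\[
\ddt R \geq \tfrac{2}{n} R^2, \qquad t\in(\alpha,\omega).
\]
In particular $R$ is non-decreasing along the flow, a fact I will use to control the sign of $R$ on each half-interval.

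The key manipulation is to pass to the reciprocal $u(t) = R(t)^{-1}$, which is well defined as long as $R$ does not vanish, and which satisfies $u' = -R'/R^2 \leq -\tfrac{2}{n}$ wherever defined. For part (i), $R(g_0)>0$ and monotonicity give $R(t)>0$, hence $u(t)>0$, for all $t\in[0,\omega)$; integrating the bound on $u'$ from $0$ to $t$ yields $u(t) \leq R(g_0)^{-1} - \tfrac{2}{n}t$, and positivity of $u$ forces $R(g_0)^{-1} - \tfrac{2}{n}t > 0$, i.e. $t < \tfrac{n}{2}R(g_0)^{-1}$, for every such $t$, whence $\omega \leq \tfrac{n}{2}R(g_0)^{-1}$. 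For part (ii), $R(g_0)<0$ and monotonicity give $R(t)<0$, hence $u(t)<0$, for all $t\in(\alpha,0]$; integrating $u'\leq -\tfrac{2}{n}$ from $t<0$ up to $0$ now gives $u(t) \geq R(g_0)^{-1} - \tfrac{2}{n}t$, and negativity of $u$ forces $R(g_0)^{-1} - \tfrac{2}{n}t < 0$, i.e. $t > \tfrac{n}{2}R(g_0)^{-1}$, for every such $t$, whence $\alpha \geq \tfrac{n}{2}R(g_0)^{-1}$.

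The steps requiring the most care are the sign bookkeeping in the backward case (ii) and the logical passage from the integral estimates to the endpoints of the maximal interval. The point for the latter is that $R(g(t))$ is smooth on the open interval $(\alpha,\omega)$ and keeps a fixed sign on the relevant half-interval (positive forward in (i), negative backward in (ii)), so the comparison with the linear function $R(g_0)^{-1} - \tfrac{2}{n}t$ cannot remain consistent once that function crosses zero at $t = \tfrac{n}{2}R(g_0)^{-1}$; this crossing time can therefore only lie on or beyond the corresponding endpoint of $(\alpha,\omega)$. I expect no genuine obstacle beyond this bookkeeping, since the entire argument rests on the elementary inequality $\ddt R \geq \tfrac{2}{n} R^2$ and on separation of variables.
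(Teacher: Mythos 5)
Your proof is correct and follows essentially the same route as the paper: both reduce to the differential inequality $\ddt R \geq \tfrac{2}{n}R^2$ (the paper quotes it from Topping's notes, you rederive it from the exact evolution equation plus Cauchy--Schwarz, which is the same underlying computation) and then conclude by comparison/separation of variables. Your explicit reciprocal substitution $u=R^{-1}$ is just a transparent way of carrying out the comparison argument the paper states more tersely, and your sign bookkeeping in both cases is accurate.
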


\begin{proof}
The evolution of the scalar curvature along a Ricci flow solution implies that
\[
\frac{\partial}{\partial t} R \geq \Delta R + \frac{2}{n} R^2
\]
(see for instance \cite[Corollary 2.5.5]{Topp}). Our assumption on the scalar curvature implies that $\Delta R \equiv 0$, thus $R$ satisfies
\begin{equation}\label{desigR}
\frac{\rm d}{\rm d t} R \geq \frac2{n} R^2,
\end{equation}
and by comparison we see that
\[
    R(g(t)) \geq (-\tfrac{2}{n} t + R(g_0)^{-1})^{-1}.
\]
Now assume that $R(g_0) > 0$. The denominator $-\tfrac{2}{n}t + R(g_0)^{-1}$ cannot vanish for $t\in [0,\omega)$, and since it is positive at $t=0$, we have that
\[
-\tfrac{2}{n}t + R(g_0)^{-1} > 0, \qquad \forall t\in [0,\omega),
\]
which, by taking the limit as $t\rightarrow \omega$, implies part (i).

The proof of part (ii) is completely analogous, by reversing the sign of the variable $t$.
\end{proof}

\begin{remark}
In the homogeneous case, one can obtain inequality \eqref{desigR} by only using the evolution equation of the scalar curvature along the bracket flow:
\begin{equation}\label{scevol}
\frac{\rm d}{\rm d t} R(\mu(t)) = 2 \tr \Ricci_{\mu(t)}^2
\end{equation}
(see \cite[Proposition 3.8, (vi)]{homRF}).
\end{remark}

This implies that an immortal homogeneous Ricci flow solution must have $R\leq0$ for all $t$, and that an ancient homogeneous Ricci flow solution must have $R\geq0$ for all $t$. The inequalities are both strict unless $R\equiv 0$, in which case $g(t)$ is flat for all $t$. We obtain in particular that there are no non-flat, eternal homogeneous solutions to the Ricci flow.

We next prove Theorem \ref{main}, which is the main result of the present paper.


\begin{proof}[Proof of Theorem \ref{main}]
Let us prove part (i). By using Theorem \ref{RF-BF}, we see that it suffices to show that if $\mu(t)$ is a bracket flow solution with a finite-time singularity at $\omega<\infty$, then  $R(\mu(t)) \underset{t\raw \omega} \lraw \infty$. Indeed, that result shows that the Ricci flow of homogeneous manifolds is equivalent, up to pull back by diffeomorphisms, to the bracket flow, and their intervals of definition coincide.

 To prove that, observe that from Lemma \ref{estimate1} we have that any bracket flow solution must satisfy
\[
  \ddt |\mu|^2 = 2\la \ddt \mu, \mu \ra \leq 2 \left| \ddt \mu \right| \left|\mu\right|   \leq C | \mu |^2 |\Ricci_\mu |,
\]
so by integrating on $[0,t)$, for any $t\in [0,\omega)$, we obtain
\[
 \log |\mu(t)|^2 - \log |\mu(0)|^2  = \int_0^t \dds \log |\mu(s)|^2 \rm d s \leq C \int_0^t |\Ricci_{\mu(s)}| \rm d s.
\]
Thus, $\int_0^\omega |\Ricci_{\mu(s)}| \rm d s = \infty$, since $\mu(t)$ is unbounded. On the other hand, for $t\in [0,\omega)$, we use \eqref{scevol} to get
\begin{align*}
    \int_0^t |\Ricci_{\mu(s)}| \rm d s  &\leq  \int_0^t \unm\left(1 + |\Ricci_{\mu(s)}|^2 \right) \rm d s  \\
        &= \tfrac{t}{2} + \tfrac14 \int_0^t \dds R(\mu(s)) \rm d s \\
        &= \tfrac{t}2 + \tfrac14 \left( R(\mu(t)) - R(\mu(0)) \right),
\end{align*}
and part (i) follows by letting $t \raw \omega$.

The proof of part (ii) is completely analogous.
\end{proof}

As an application of Theorem \ref{main}, we obtain that homogeneous Ricci flow solutions on $\RR^n$ (and in particular on solvmanifolds, since they are all diffeomorphic to a quotient of $\RR^n$) are always immortal.

\begin{corollary}\label{solvmanifolds}
Let $M = G/K$ be a homogeneous space. If the universal cover of $M$ is diffeomorphic to $\RR^n$, then the Ricci flow solution $g(t)$ starting at any $G$-invariant metric $g_0$ on $M$ exists for all $t\geq 0$. On the other hand, if the universal cover of $M$ is not diffeomorphic to $\RR^n$, then there exists a $G$-invariant metric $g_0$ on $M$ such that the Ricci flow $g(t)$ starting at $g_0$ develops a singularity in finite time.
\end{corollary}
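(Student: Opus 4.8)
The plan is to reduce both assertions to a single classical fact linking the topology of $M$ to the possible signs of the scalar curvature of its invariant metrics, and then to feed this into Theorem \ref{main} together with the scalar-curvature comparison estimate proved just above. The fact I would invoke is a theorem of B\'erard-Bergery: for a homogeneous space $M=G/K$ with $G$ connected, there exists a $G$-invariant metric of positive scalar curvature \emph{if and only if} the universal cover of $M$ is not diffeomorphic to $\RR^n$; equivalently, when the universal cover is diffeomorphic to $\RR^n$, every $G$-invariant metric satisfies $R\leq 0$. Since scalar curvature is a local invariant and a covering map is a local isometry, it is harmless to pass between $M$ and its universal cover throughout.

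For the first assertion, suppose the universal cover of $M$ is diffeomorphic to $\RR^n$. As recalled in Section \ref{homRF-BF}, a homogeneous solution $g(t)$ starting at a $G$-invariant metric $g_0$ stays $G$-invariant on the same underlying manifold $M$ for all $t$ in its maximal forward interval $[0,\omega)$. Hence each $g(t)$ is a $G$-invariant metric on a space whose universal cover is $\RR^n$, so the B\'erard-Bergery theorem gives $R(g(t))\leq 0$ for all $t\in[0,\omega)$. If we had $\omega<\infty$, then Theorem \ref{main}(i) would force $R(g(t))\to\infty$ as $t\to\omega$, contradicting $R(g(t))\leq 0$. Therefore $\omega=\infty$ and the solution is immortal.

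For the second assertion, suppose the universal cover of $M$ is not diffeomorphic to $\RR^n$. By the B\'erard-Bergery theorem there is a $G$-invariant metric $g_0$ on $M$ with $R(g_0)>0$. The scalar-curvature comparison proved above (namely, $R(g_0)>0$ implies $\omega\leq \tfrac{n}{2}R(g_0)^{-1}$) then yields $\omega<\infty$, so the Ricci flow starting at $g_0$ develops a singularity in finite time, as desired.

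The only genuine input here is the B\'erard-Bergery dichotomy; everything on the Ricci-flow side is immediate from Theorem \ref{main} and the comparison estimate. Accordingly, the main obstacle is conceptual rather than computational: one must know (or reprove) the structural classification of homogeneous spaces underlying that dichotomy, which rests, via Mostow's theorem on maximal compact subgroups, on the fact that the universal cover $\tilde G/\tilde K$ is diffeomorphic to $\RR^n$ precisely when $\tilde K$ is a maximal compact subgroup of $\tilde G$, together with the analysis of which such spaces can support an invariant metric of positive scalar curvature. The one remaining point to check carefully is that $G$-invariant metrics on $M$ lift to invariant metrics on the universal cover with the same scalar curvature, so that the theorem may legitimately be applied on the cover; this is routine since the covering is a local isometry.
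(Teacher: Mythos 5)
Your proof is correct and follows essentially the same route as the paper: both rest on the B\'erard-Bergery dichotomy relating the topology of the universal cover to the sign of the scalar curvature of invariant metrics, then apply Theorem \ref{main}(i) for immortality and the estimate $\omega\leq\tfrac{n}{2}R(g_0)^{-1}$ for the finite-time singularity. No substantive differences.
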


\begin{proof}
By \cite[Th\'eoreme 2]{BB}, the universal cover of $M$ is diffeomorphic to $\RR^n$ if and only if any $G$-invariant metric on $M$ has non-positive scalar curvature. Thus if this is the case, $R(g(t))\leq0$ for all $t$, and Theorem \ref{main} yields that we cannot have a finite-time singularity. The last assertion follows from Lemma \ref{signoR}, (i), since on such a manifold there exist $G$-invariant metrics with positive scalar curvature.
\end{proof}

\begin{remark}
One could state a result analogous to Corollary \ref{solvmanifolds} but for the case of positive scalar curvature and backward time of existence. However, the conclusion in this case is trivial, because of the rigidity imposed by the positivity of the scalar curvatures. Indeed, recall that to ensure that every $G$-invariant metric on a homogeneous space $G/K$ has non-negative scalar curvature, we must have that every $G$-invariant metric is (locally) the Riemannian product of a flat metric and an invariant metric on a compact homogeneous space of \emph{normal type} (in the sense of B\'erard-Bergery, see \cite{BB}). Furthermore, such a metric is isometric to the Riemannian product of a flat metric and invariant metrics on compact, isotropy-irreducible homogeneous spaces (see \cite{Ber}), which are necessarily Einstein. Thus a Ricci flow solution starting at any of those metrics is given by the product of Ricci flow solutions on each factor, and this is clearly an ancient solution.
\end{remark}

\end{document}